\definecolor{blue3}{RGB}{0,0,205}
\newtheorem{thm}{Theorem}[section]
\newtheorem{corollary}[thm]{Corollary}
\newtheorem{lemma}[thm]{Lemma}
\newtheorem{proposition}[thm]{Proposition}
\theoremstyle{definition}
\newtheorem{definition}[thm]{Definition}
\newtheorem{example}[thm]{Example}
\theoremstyle{remark}
\newtheorem{remark}[thm]{Remark}
\newcommand{\cF}{{\mathcal F}}
\newcommand{\ve}{\varepsilon}
\newcommand{\bC}{{\mathbb C}}
\newcommand{\bN}{{\mathbb N}}
\newcommand{\bR}{{\mathbb R}}
\newcommand{\bZ}{{\mathbb Z}}
\newcommand{\oL}[1]{\bar{#1}}
\newcommand{\fg}{\mathfrak{g}}
\newcommand{\fn}{\mathfrak{n}}
\newcommand{\ft}{\mathfrak{t}}
\newcommand{\bX}{\bar{X}}
\newcommand{\Ad}{\operatorname{Ad}}
\newcommand{\id}{\operatorname{id}}
\numberwithin{equation}{section}
\begin{document}

\title{Toric actions in cosymplectic geometry}

\author{Giovanni Bazzoni and Oliver Goertsches}
\address{Departamento de \'Algebra, Geometr\'ia y Topolog\'ia, Facultad de Ciencias Matem\'a\-ticas, Universidad Complutense de Madrid, Plaza de Ciencias 3, 28040 Madrid}
\email{gbazzoni@ucm.es}

\address{Fachbereich Mathematik und Informatik, Philipps-Universit\"at Marburg, Hans-Meer\-wein-Stra\ss e 6, 35032 Marburg}
\email{goertsch@mathematik.uni-marburg.de}

\date{}

\subjclass[2010]{53D15, 53D20, 53D17}

\begin{abstract}
We show that compact toric cosymplectic manifolds are mapping tori of equivariant symplectomorphisms of toric symplectic manifolds.
\end{abstract}

\maketitle
\section{Introduction}

The study of smooth manifolds endowed with the action of a Lie group dates back to almost 150 years ago, with the \emph{Erlanger Programm} of Felix Klein. In its general form, it is a very ambitious program, hence it makes sense to consider special classes of manifolds and Lie groups. Under such hypotheses, one may be able to deduce rigidity results. For instance, Delzant proved in \cite{Delzant} that a compact symplectic manifold of dimension $2n$ endowed with the effective action of an $n$-torus is a combinatorial object, since it is given by a certain kind of polytope. In odd dimension, the structure of toric contact manifolds has been investigated by Lerman in \cite{Lerman}, showing a pattern akin to the symplectic one, up to a few exceptional cases.

Apart from contact manifolds, cosymplectic manifolds can be considered as the odd-dimensional analogue of symplectic manifolds. They were first defined by Libermann in \cite{Libermann} and have been intensively investigated since, see for instance \cite{BFM,CDNY,CdLM,CF,GMP,Li}.

In this short note we study Hamiltonian torus actions on cosymplectic manifolds. In Section \ref{sec:toric} we give the natural definition of toric actions on cosymplectic manifolds and prove some structural results: toric cosymplectic manifolds arise as mapping tori of toric symplectic manifolds with a $T$-equivariant symplectomorphism. By a result of Pinsonnault \cite{Pinsonnault}, this implies that, as smooth manifolds, they are products of toric symplectic manifolds with a circle. A different proof of this fact in the $K$-cosymplectic case is given in Section \ref{Toric:K:cosymplectic}.

In the Poisson world, cosymplectic manifolds appear naturally as the vanishing locus of $\Pi^{n+1}$, where $\Pi$ is the Poisson bivector on a $(2n+2)$-dimensional $b$-Poisson manifold. Equivalently, they appear as the locus where the $b$-symplectic form blows up on a $(2n+2)$-dimensional $b$-symplectic (a.k.a.\ log symplectic) manifold, see \cite{FMTM,GMP1}. Compact toric $b$-symplectic manifolds have been investigated in \cite{GLPT,GMP2}. In particular, in \cite{GMP2}, the authors show that the effective action of a torus $T^{n+1}$ on a $(2n+2)$-dimensional $b$-symplectic manifold induces an effective action of a torus $T^n$ on the cosymplectic hypersurface, thus producing a compact toric cosymplectic manifold. Under the hypothesis that the symplectic foliation of this cosymplectic manifold admits a compact leaf, they show that the cosymplectic manifold is the product of a toric symplectic manifold and a circle, thus obtaining a result analogous to ours. With respect to their approach, we make the following remarks:
\begin{itemize}
 \item we do not assume the existence of a compact leaf in the symplectic foliation, we rather prove that there is always one; this follows from Corollary \ref{cor:kompaktes:blatt} and the proof of Theorem \ref{thm:toriccosymplectic};
 \item in \cite[Theorem 50]{GMP1}, the authors prove that every cosymplectic manifold arises as critical hypersurface of a $b$-Poisson manifold \emph{with boundary}. However, the theory of toric $b$-Poisson manifolds developed in \cite{GMP2} is valid only for \emph{closed} manifolds, hence it is not immediately clear how to recover results for arbitrary toric cosymplectic manifolds.
\end{itemize}

A previous preprint version of this paper contained a proof of a convexity theorem for the momentum map of a Hamiltonian action on a cosymplectic manifold. The referee pointed out to us that such a theorem did exist previously, see \cite[Theorem 4.4]{ZhenqiHe}.

\subsection*{Acknowledgements}
We would like to thank Eva Miranda and \'Alvaro Pelayo for useful discussions. The authors are also indebted to the anonymous referee for her/his help in improving the exposition of the paper and for pointing out to us relevant literature. The first author is supported by a Juan de la Cierva -- Incorporaci\'on Grant of Ministerio de Ciencia, Innovaci\'on y Universidades (Spain).

\section{Cosymplectic and Hamiltonian Lie group actions}

\begin{definition}
A $(2n+1)$-dimensional manifold is called \emph{cosymplectic} if it is endowed with a $1$-form $\eta$ and a $2$-form $\omega$, both closed, such that $\eta\wedge \omega^n$ is a volume form.
\end{definition}

Since the 1-form $\eta$ is closed, its kernel defines a codimension 1 foliation, denoted $\cF_\eta$. On a cosymplectic manifold $(M,\eta,\omega)$ the \emph{Reeb vector field} $R$ is uniquely determined by the conditions $\imath_R \eta = 1$ and $\imath_R \omega = 0$. It defines a 1-dimensional foliation $\cF_R$ of the cosymplectic manifold.

Simple examples of cosymplectic manifolds are provided by \emph{symplectic mapping tori} (see \cite{Li}): let $(L,\sigma)$ be a symplectic manifold and let $\varphi\colon L\to L$ be a symplectomorphism. Consider the $\bZ$-action on $L\times\bR$ generated by $(p,t)\mapsto (\varphi(p),t+1)$. The quotient space $L_\varphi=(L\times\bR)/\bZ$ is a smooth manifold and has a natural cosymplectic structure $(\eta,\omega)$, obtained by projecting to $L_\varphi$ the cosymplectic structure $(dt,\sigma)$ on $L\times\bR$. Moreover, one has a fiber bundle $L\to L_\varphi\to S^1$.

Let $(M,\eta,\omega)$ be a cosymplectic manifold and let $G$ be a Lie group acting smoothly on $M$. We denote the action of $g\in G$ on $M$ by $x\mapsto g\cdot x$.

\begin{definition}
The action is \emph{cosymplectic} if
\[
g^*\eta=\eta \quad \mathrm{and} \quad g^*\omega=\omega \qquad \forall g\in G\,;
\]
in this case one says that $G$ acts on $(M,\eta,\omega)$ by \emph{cosymplectomorphisms}.
\end{definition}

If $G$ acts by cosymplectomorphisms then all fundamental vector fields of the action are cosymplectic, i.e., for all $A\in \fg$ we have
\[
L_{\oL{A}}\eta=0 \quad \mathrm{and} \quad L_{\oL{A}}\omega=0\,,
\]
where 
\[
\oL{A}(x)=\frac{d}{dt}\Big|_{t=0}\exp(tA)\cdot x
\]
denotes the corresponding fundamental vector field. 

\begin{proposition}{\cite[Corollary 6.4]{Bazzoni-Goertsches}} \label{prop:etalinearform}
Let $G$ be a Lie group acting on a cosymplectic manifold $(M,\eta,\omega)$ by cosymplectomorphisms. Then $\eta$ induces a linear form $\eta_0\in\fg^*$.
\end{proposition}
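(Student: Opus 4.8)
The plan is to construct $\eta_0\in\fg^*$ by evaluating $\eta$ against fundamental vector fields and then showing that this evaluation is constant on $M$, so that it descends to a genuine linear functional on $\fg$. Concretely, for each $A\in\fg$ I would consider the function $f_A\colon M\to\bR$ defined by $f_A(x)=\eta_x(\oL{A}(x))=(\imath_{\oL{A}}\eta)(x)$, and the goal is to prove that $f_A$ is a constant $c_A$ depending only on $A$; one then sets $\eta_0(A)=c_A$. Linearity of $A\mapsto\eta_0(A)$ is immediate from the $\bR$-linearity of $A\mapsto\oL{A}$ and the fibrewise linearity of $\eta$, so the entire content of the statement is the \emph{constancy} of $f_A$.

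First I would establish that each $f_A$ is constant by computing its differential. Since the action is by cosymplectomorphisms we have $L_{\oL{A}}\eta=0$, and Cartan's magic formula gives
\begin{equation*}
0=L_{\oL{A}}\eta=\imath_{\oL{A}}\,d\eta+d\,\imath_{\oL{A}}\eta.
\end{equation*}
Because $\eta$ is closed, $d\eta=0$, so this collapses to $d\,\imath_{\oL{A}}\eta=d f_A=0$. Hence $f_A$ is locally constant. If $M$ is connected this forces $f_A$ to be a single constant $c_A$, and the construction goes through. I would note that connectedness is the natural standing hypothesis here (one works on a connected component, or assumes $M$ connected from the outset, as is implicit in the toric setting of the paper); otherwise $\eta_0$ would only be defined component-by-component.

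The main obstacle is essentially cosmetic rather than conceptual: the real work is recognizing that closedness of $\eta$ is exactly what makes Cartan's formula collapse, turning the invariance condition $L_{\oL{A}}\eta=0$ into the statement $d(\imath_{\oL{A}}\eta)=0$. Once that is seen, constancy and linearity are routine. I would then briefly verify well-definedness: $A\mapsto\oL{A}$ is $\bR$-linear (indeed a Lie algebra anti/homomorphism), so $A\mapsto c_A=\eta(\oL{A})$ inherits linearity, giving the desired $\eta_0\in\fg^*$ with $\eta_0(A)=\imath_{\oL{A}}\eta$ evaluated at any point of $M$. The only subtlety to flag is the connectedness assumption needed to pass from locally constant to constant.
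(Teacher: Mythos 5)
Your proof is correct and is exactly the standard argument behind the cited result: Cartan's formula together with $L_{\oL{A}}\eta=0$ and $d\eta=0$ yields $d(\imath_{\oL{A}}\eta)=0$, so $\eta(\oL{A})$ is constant on each connected component and $A\mapsto\eta(\oL{A})$ defines $\eta_0\in\fg^*$. The paper itself only cites this as \cite[Corollary 6.4]{Bazzoni-Goertsches} without reproducing the proof, and your flagging of the connectedness hypothesis is the right (and only) caveat.
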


\begin{definition}
Let $(M,\eta,\omega)$ be a cosymplectic manifold. A vector field $X$ on $M$ is \emph{Hamiltonian} if $\eta(X)=0$ and there exists $f\in C^\infty(M)$, invariant under the Reeb flow, such that $\imath_X\omega=df$. The function $f$ is the \emph{Hamiltonian} of the vector field $X$.
\end{definition}

Given $f\in C^\infty(M)$ invariant under the Reeb flow, the corresponding \emph{Hamiltonian vector field}, denoted $X_f$, is uniquely determined by the condition $\eta(X_f)=0$ and $\imath_{X_f}\omega=df$.

The following definition was introduced by Albert \cite{Albert}.

\begin{definition}
Let $(M,\eta,\omega)$ be a cosymplectic manifold and let $G$ be a Lie group acting on $M$ by cosymplectomorphisms. The action is \emph{Hamiltonian} if there exists a smooth map $\mu\colon M\to\fg^*$ such that
\[
\oL{A}=X_{\mu^A} \quad \forall A\in\fg\,,
\]
where $\mu^A\colon M\to\bR$ is defined by the rule $\mu^A(x)=\mu(x)(A)$ and $X_{\mu^A}$ is the corresponding Hamiltonian vector field. Furthermore, we require $\mu$ to be equivariant with respect to the natural coadjoint action of $G$ on $\fg^*$:
\[
\mu(g\cdot x)=g\cdot\mu(x)=\mu(x)\circ \mathrm{Ad}_{g^{-1}}\,.
\]
$\mu$ is called a \emph{momentum map} of the action.
\end{definition}

It follows from the definition that $\mu^A$ is invariant under the Reeb flow and that each component of the momentum map satisfies $d\mu^A=\imath_{\oL{A}}\omega$. Moreover $\eta(\oL{A})=0$ for every $A\in\fg$; this implies that, if $M$ is compact, the $\bR$-action given by the flow of the Reeb field is never Hamiltonian.

\section{Toric cosymplectic manifolds}\label{sec:toric}

As in the symplectic situation, one has an upper bound for the dimension of a torus that can act effectively and in a Hamiltonian fashion:
\begin{lemma} Consider an effective Hamiltonian action of a torus $T$ on a cosymplectic manifold $M$ of dimension $2n+1$. Then $\dim T \leq n$.
\end{lemma}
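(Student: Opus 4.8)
The plan is to find, at a suitably generic point, a large collection of linearly independent fundamental vector fields tangent to the symplectic leaf and to feed these into the non-degeneracy of $\omega$ on the leaf. First I would recall from Proposition \ref{prop:etalinearform} that $\eta$ induces a linear form $\eta_0 \in \fg^* = \ft^*$, where $\ft = \Lie(T)$. Since $\eta(\oL{A}) = 0$ for every $A \in \ft$ (as noted after the definition of a Hamiltonian action), in fact $\eta_0 = 0$, so every fundamental vector field $\oL{A}$ lies in $\ker\eta = T\cF_\eta$ at each point; moreover $\imath_{\oL A}\omega = d\mu^A$. The restriction of $\omega$ to a leaf $F$ of $\cF_\eta$ is a symplectic form (this is the standard fact that $(\eta,\omega)$ restricts to a symplectic structure on the kernel foliation), so on $F$ the torus acts in a Hamiltonian symplectic manner with momentum map the restriction of $\mu$.

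The key step is then to bound $\dim T$ by the effectiveness of the action together with the non-degeneracy of $\omega|_F$. I would argue exactly as in the symplectic Delzant bound. Fix a point $x$ where the stabilizer is as small as possible; by effectiveness and the structure of torus actions, the isotropy subalgebra $\ft_x = \{A \in \ft : \oL A(x) = 0\}$ is trivial at a generic point, so the evaluation map $A \mapsto \oL A(x)$ is injective on $\ft$. This produces $\dim T$ linearly independent vectors $\oL{A_1}(x),\dots,\oL{A_k}(x)$ inside $T_x F$, where $k = \dim T$. Because these fields are Hamiltonian with respect to $\omega|_F$, the subspace $V = \spa{\oL{A_1}(x),\dots,\oL{A_k}(x)}$ is isotropic for the symplectic form $\omega|_F$: indeed $\omega(\oL{A_i},\oL{A_j}) = d\mu^{A_i}(\oL{A_j}) = \oL{A_j}(\mu^{A_i})$, and by equivariance together with the fact that $\mu^{A_i}$ is invariant under the commuting flows of the torus, this pairing vanishes (the components of a torus momentum map Poisson-commute). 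Hence $V$ is an isotropic subspace of the symplectic vector space $(T_x F, \omega|_F)$.

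Finally I would invoke the standard linear-symplectic inequality: an isotropic subspace of a $2n$-dimensional symplectic vector space has dimension at most $n$. Since $\dim T_x F = 2n$ (as $\dim M = 2n+1$ and $F$ is a leaf of the codimension-one foliation $\cF_\eta$) and $V$ is isotropic of dimension $k = \dim T$, we conclude $\dim T \le n$, as desired.

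The main obstacle I anticipate is justifying that $\ft_x$ is generically trivial and that effectiveness genuinely forces the evaluation map to be injective at the chosen point; for torus actions this is classical (the union of fixed-point sets of nontrivial subtori is a proper closed subset), but one must make sure the \emph{Hamiltonian} hypothesis is not needed for this reduction, only for the isotropy of $V$. A secondary technical point is verifying cleanly that the components $\mu^{A}$ of the momentum map Poisson-commute along a leaf, which I would extract from equivariance of $\mu$ together with the commutativity of $T$; this is where the cosymplectic bookkeeping ($\eta(\oL A)=0$ and $d\mu^A = \imath_{\oL A}\omega$) must be handled carefully rather than simply imported from the symplectic case.
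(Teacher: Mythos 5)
Your proposal is correct and takes essentially the same route as the paper: the paper's proof also observes that, $T$ being abelian, $\omega(\bar A,\bar B)=-\{\mu^A,\mu^B\}=\mu^{[A,B]}=0$, so the tangent spaces of the $T$-orbits are isotropic subspaces of the $2n$-dimensional leaves of $\cF_\eta$, and effectiveness supplies a point where the orbit has dimension $\dim T$. You merely spell out in more detail the standard facts the paper leaves implicit (generic triviality of the isotropy algebra and the linear-algebra bound on isotropic subspaces).
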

\begin{proof}
Let $\mu:M\to \ft^*$ be a momentum map of the action. Then, as $T$ is Abelian, by \cite[Section 7]{Bazzoni-Goertsches} for all $A,B\in \ft$
\[
0 = -\mu^{[A,B]} = \{ \mu^A,\mu^B\} = - \omega(\bar{A},\bar{B}),
\]
where $\{\cdot,\cdot\}$ is the Poisson bracket. The tangent spaces of the $T$-orbits are therefore isotropic subspaces of $\cF_\eta$, which can be at most $n$-dimensional.
\end{proof}

Motivated by the symplectic setting we thus define toric objects in the cosymplectic category as those with a Hamiltonian action of maximal dimension:

\begin{definition} A \emph{toric cosymplectic manifold} is a cosymplectic manifold $(M,\eta,\omega)$ of dimension $2n+1$ with an effective Hamiltonian $T^n$-action.
\end{definition}

\begin{example} \label{ex:toricmappingtori}
If $(L,\sigma,T,\mu)$ is a toric symplectic manifold, and $\varphi:L\to L$ is any $T$-equivariant symplectomorphism, consider the mapping torus $L_\varphi = L\times [0,r]/(p,0)\sim (\varphi(p),r)$, for some number $r>0$. Its natural cosymplectic structure is toric with respect to the $T$-action on the factor $L$.
\end{example}

\begin{proposition}\label{prop:momentummorsebott} Consider a Hamiltonian $T$-action on a compact cosymplectic manifold $M$, with momentum map $\mu\colon M\to \ft^*$. Then for generic $A\in \ft$, the component $\mu^A$ of the momentum map $\mu$ is a $T$-invariant Morse-Bott function with critical set $M^T$.
\end{proposition}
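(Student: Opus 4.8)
The plan is to follow the classical symplectic argument, taking care of the one extra direction coming from the degeneracy of $\omega$. Since $T$ is a torus the coadjoint action is trivial, so equivariance of $\mu$ gives $\mu^A(g\cdot x)=\mu(x)(\Ad_{g^{-1}}A)=\mu(x)(A)=\mu^A(x)$; hence every component $\mu^A$ is automatically $T$-invariant. To locate the critical set I would use that $d\mu^A=\imath_{\oL{A}}\omega$. A point $x$ is critical precisely when $\oL{A}(x)$ lies in the kernel of $\omega_x$. The cosymplectic condition $\eta\wedge\omega^n\neq 0$, together with $\imath_R\omega=0$, forces $\ker\omega_x=\langle R_x\rangle$ to be the Reeb line; and since $\eta(\oL{A})=0$ while $\eta(R)=1$, the only element of $\langle R_x\rangle$ killed by $\eta$ is $0$. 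Thus $\oL{A}(x)=0$, so $\Crit(\mu^A)$ equals the zero set of the fundamental vector field $\oL{A}$.

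Next I would fix $A$ generic, meaning that $\exp(tA)$ generates a dense one-parameter subgroup of $T$; this excludes only the countably many hyperplanes $\ker\alpha$ attached to nonzero weights, so it is indeed a generic condition. For such $A$ the zero set of $\oL{A}$ coincides with $M^T$: a zero of $\oL{A}$ is fixed by all $\exp(tA)$, hence by the closure $T$, and the reverse inclusion is clear. Therefore $\Crit(\mu^A)=M^T$, which is a closed submanifold, being the fixed-point set of a compact group action.

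It remains to verify the Morse--Bott condition along $M^T$. At $p\in M^T$ the differential of $\oL{A}$ at its zero is a well-defined endomorphism $L_p$ of $T_pM$, namely the infinitesimal generator of the linearized $\exp(tA)$-action, and a short computation using $d\mu^A=\imath_{\oL{A}}\omega$ and $\oL{A}(p)=0$ gives $\Hess(\mu^A)_p(X,Y)=\omega_p(L_pX,Y)$. Choosing a $T$-invariant metric, I decompose $T_pM=V_0\oplus V_0^\perp$ into the fixed subspace $V_0=T_pM^T$ and the sum of the nontrivial weight spaces; $L_p$ vanishes on $V_0$, preserves $V_0^\perp$, and acts on the weight space of weight $\alpha$ as the infinitesimal rotation by $2\pi\langle\alpha,A\rangle$. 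The key point, genuinely special to the cosymplectic case, is that $R$ is $T$-invariant, so $R_p$ is fixed by the isotropy representation and hence $\ker\omega_p=\langle R_p\rangle\subseteq V_0$. Consequently $\ker\Hess(\mu^A)_p=\{X:L_pX\in\ker\omega_p\}$ reduces, on $V_0^\perp$, to $\{X:L_pX=0\}$; since $A$ is generic, $\langle\alpha,A\rangle\neq 0$ for every weight occurring, so $L_p$ is invertible on $V_0^\perp$ and the kernel of the Hessian is exactly $V_0=T_pM^T$. This is precisely the Morse--Bott condition.

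The main obstacle is the degeneracy of $\omega$: unlike the symplectic case, the bilinear form $\omega_p(L_p\,\cdot\,,\cdot)$ has a guaranteed kernel coming from $\ker\omega_p$, so transverse nondegeneracy could a priori fail. The resolution is the observation that this kernel is the Reeb line, which is tangent to $M^T$ and thus already absorbed into $T_pM^T$; checking carefully that $R_p\in V_0$ and that $\langle R_p\rangle$ meets $V_0^\perp$ only in $0$ is where the argument must be watertight.
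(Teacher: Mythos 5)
Your argument is correct and is essentially the proof the paper has in mind: the paper delegates to \cite[Proposition 8.4]{Bazzoni-Goertsches}, and the corrected Hessian computation in its footnote, $\Hess_{\mu^A}(v,w)(p)=\omega(\nabla_V\bar A,W)$, is exactly your formula $\Hess(\mu^A)_p(X,Y)=\omega_p(L_pX,Y)$, followed by the same weight-space analysis for generic $A$. Your explicit observation that the kernel $\langle R_p\rangle$ of $\omega_p$ lies in the fixed subspace $V_0=T_pM^T$ (because $R$ is $T$-invariant) is the right way to absorb the one degenerate direction, and makes the write-up more self-contained than the paper's.
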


\begin{proof}
In the special case of an action on a $K$-cosymplectic manifold, where $T$ is the subtorus of the closure of the flow of the Reeb vector field whose Lie algebra is the kernel of $\eta$, this was shown in \cite[Proposition 8.4]{Bazzoni-Goertsches}. The proof in the general case is the same; the only difference is that in our situation we cannot identify the fixed point set $M^T$ with the union of the closed Reeb orbits (denoted $C$ in \cite{Bazzoni-Goertsches})\footnote{Note that the centered chain of equations in the proof of \cite[Proposition 8.4]{Bazzoni-Goertsches} is incorrect. The correct argument is as follows: one first chooses an adapted Riemannian metric $g$ for which the $T$-action is isometric. For this, one first averages over $T$ to obtain a $T$-invariant metric $g_{\mathcal D}$ on ${\mathcal D} = \ker \eta$, and then defines a $T$-invariant Riemannian metric by $\tilde g = g_{{\mathcal{D}}} + \eta\otimes \eta$. Then one continues as in the third paragraph of the proof of \cite[Proposition 2.8]{Bazzoni-Goertsches}. The metric $g$ produced there is then automatically $T$-invariant. Using this metric $g$, the correct version of the computation in \cite[Proposition 8.4]{Bazzoni-Goertsches} reads ${\mathrm{Hess}}_{\mu^A}(v,w)(p)=V(\omega(\bar A,W))=V(g(\bar A,\phi W))=g(\nabla_V \bar A,\phi W)+g(\bar A,\nabla_V \phi W)=g(\nabla_V\bar A,\phi W)=\omega(\nabla_V\bar A,W)$. The missing factor $2$ is irrelevant for what follows.}. The proposition is also a special case of \cite[Theorem 3.4.6]{LinSjamaar}.
\end{proof}

For our main result we need a sufficient condition for the compactness of the leaves of $\cF_\eta$, which is essentially a distillation of the proof of \cite[Theorem 1]{Tischler}.

\begin{lemma}\label{kompaktes:blatt}
Let $M$ be a compact manifold. For any closed, nowhere vanishing $\eta'\in \Omega^1(M)$ such that the cohomology class $[\eta']$ is a real multiple of an integer class, the leaves of $\cF_{\eta'}$ are compact.
\end{lemma}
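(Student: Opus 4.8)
The plan is to realize $\cF_{\eta'}$ as the fiber foliation of a locally trivial fiber bundle $F\colon M\to S^1$ obtained by integrating $\eta'$, so that compactness of the leaves follows from compactness of $M$. This is exactly the mechanism behind Tischler's theorem, and the cohomological hypothesis on $[\eta']$ is precisely what lets us bypass the approximation step in Tischler's original argument.

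First I would record that $[\eta']\neq 0$: a nowhere-vanishing closed $1$-form on a compact manifold cannot be exact, since a primitive would be a function without critical points, which is impossible on a compact manifold. Writing $[\eta']=\lambda\,\alpha$ with $\alpha\in H^1(M;\bZ)$ a nonzero integral class and $\lambda\in\bR$, we therefore have $\lambda\neq 0$. The decisive consequence is that the period group $P=\{\int_\gamma\eta' : \gamma \text{ a loop in } M\}$ is contained in $\lambda\bZ$: indeed $\int_\gamma\eta'=\lambda\int_\gamma\alpha\in\lambda\bZ$ because $\alpha$ is integral. Hence $P$ is a nontrivial discrete subgroup of $\bR$, say $P=c\bZ$ with $c>0$.

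Next, fixing a basepoint $x_0\in M$, I would define $F\colon M\to \bR/P\cong S^1$ by $F(x)=\int_{x_0}^x\eta' \pmod P$, the integral being taken along any path from $x_0$ to $x$; this is well defined precisely because two such paths differ by a loop whose period lies in $P$. This is the one point where the hypothesis is genuinely used. By construction $F$ is smooth and pulls back the standard translation-invariant $1$-form on $\bR/P$ to $\eta'$; in particular, since $\eta'$ is nowhere vanishing, $F$ is a submersion. As $M$ is compact, $F$ is then a proper submersion onto the connected manifold $S^1$, so by Ehresmann's theorem it is a locally trivial fiber bundle, and every fiber $F^{-1}(s)$ is a compact submanifold of $M$.

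It remains to identify the leaves of $\cF_{\eta'}$ with the connected components of the fibers, and this is the step I expect to require the most care, since fibers of a bundle over $S^1$ need not be connected. On one hand, the tangent space to each fiber equals $\ker dF=\ker\eta'$, so every fiber is an integral manifold of the distribution defining $\cF_{\eta'}$, and hence each of its connected components is a leaf. On the other hand, $F$ is locally constant along any leaf, as $dF$ annihilates the leaf's tangent spaces, and therefore constant on the leaf by connectedness; thus each leaf lies in a single fiber, hence in a single component. Consequently the leaves are exactly the connected components of the fibers. Since a compact manifold has finitely many components, each of which is closed, every leaf is compact.
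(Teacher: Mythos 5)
Your proof is correct and follows essentially the same route as the paper: both realize $\cF_{\eta'}$ as the fiber foliation of a submersion $M\to S^1$ built from $\eta'$, exactly as in Tischler's argument, with the integrality hypothesis guaranteeing that the circle-valued map exists. The only cosmetic difference is that you construct the map directly by integrating $\eta'$ modulo its (discrete) period group, whereas the paper obtains it from the identification $H^1(M;\bZ)\cong[M,S^1]$ and absorbs the exact correction term via the group structure of $S^1$; your version is, if anything, slightly more careful about distinguishing fibers from their connected components.
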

\begin{proof}
Via the usual identification of $H^1(M,\bZ)$ with homotopy classes of maps $M\to S^1$, we can write $c\eta' = f^*(dt)+dg$, for maps $f:M\to S^1$ and $g:M\to \bR$ and a nonzero real constant $c$ which is irrelevant as the kernel of $c\eta'$ equals the kernel of $\eta'$. Denoting by $\pi:\bR\to S^1$ the projection, we can write the right-hand side as $(f + \pi\circ g)^*(dt)$, where $+$ is the group structure in $S^1$. This in particular means that $h\coloneqq f + \pi\circ g$ is a submersion. Hence, it defines a fibration $h:M\to S^1$ which coincides with the foliation defined by $\eta'$.
\end{proof}

The lemma shows that if the foliation $\cF_\eta$ has nonclosed leaves, then no multiple of the class $[\eta]$ belongs to $H^1(M;\bZ)$. 

\begin{corollary}\label{cor:kompaktes:blatt}
Let $(M,\eta,\omega)$ be a compact cosymplectic manifold with $b_1(M)=1$. Then the leaves of $\cF_\eta$ are compact.
\end{corollary}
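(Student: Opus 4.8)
The plan is to apply Lemma \ref{kompaktes:blatt} directly to the one-form $\eta' = \eta$. First I would verify the two pointwise hypotheses: $\eta$ is closed by the very definition of a cosymplectic structure, and it is nowhere vanishing because $\eta\wedge\omega^n$ is a volume form---if $\eta$ vanished at a point $x$, then $(\eta\wedge\omega^n)(x)=0$, contradicting nondegeneracy. Thus the only remaining condition to check is the cohomological one, namely that $[\eta]\in H^1(M;\bR)$ is a real multiple of an integer class.

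This is precisely where the hypothesis $b_1(M)=1$ enters. Since $b_1(M)=1$, the real cohomology $H^1(M;\bR)$ is one-dimensional. The free part of the finitely generated abelian group $H^1(M;\bZ)$ has rank $b_1(M)=1$, so the image of the natural map $H^1(M;\bZ)\to H^1(M;\bR)$ is a rank-one lattice, which in particular spans the one-dimensional space $H^1(M;\bR)$ over $\bR$. Choosing any integer class $\alpha$ whose image $\alpha_{\bR}$ in $H^1(M;\bR)$ is nonzero, every element of $H^1(M;\bR)$---and in particular $[\eta]$---is a real multiple of $\alpha_{\bR}$. This verifies the hypothesis of Lemma \ref{kompaktes:blatt}, whose conclusion that the leaves of $\cF_\eta$ are compact is exactly the assertion.

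There is essentially no deep obstacle here, as the substantive content resides entirely in Lemma \ref{kompaktes:blatt}. The only point requiring a moment's care is the existence of a nonzero integer class with nonzero real image, which is guaranteed precisely because $b_1(M)\geq 1$; the rest is the elementary fact that a full-rank lattice in a one-dimensional real vector space renders every vector a real multiple of a single lattice generator. (One may also note that $[\eta]\neq 0$, since an exact nowhere-vanishing one-form on a compact manifold would force a critical point of its primitive, but this is not needed for the argument.)
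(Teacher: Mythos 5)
Your proposal is correct and follows essentially the same route as the paper: both arguments reduce the corollary to Lemma \ref{kompaktes:blatt} by observing that, since $b_1(M)=1$, the image of $H^1(M;\bZ)$ spans the one-dimensional space $H^1(M;\bR)$, so $[\eta]$ is a real multiple of an integer class. Your explicit verification of the pointwise hypotheses (closedness and nonvanishing of $\eta$) and the parenthetical remark that $[\eta]\neq 0$ are fine additions but do not change the argument.
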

\begin{proof}
Since $b_1(M)=1$ and $M$ is compact, it follows that $H^1(M;\bR)\cong \langle [\eta]\rangle$. Since $H^1(M;\bZ)/\mathrm{torsion}\hookrightarrow H^1(M;\bR)$, a suitable multiple of $\eta$ defines an integer cohomology class, and we can apply Lemma \ref{kompaktes:blatt}.
\end{proof}

\begin{thm} \label{thm:toriccosymplectic}
Let $(M^{2n+1},\eta,\omega)$ be a compact toric cosymplectic manifold.
\begin{enumerate}
\item $b_{2k}(M) = b_{2k+1}(M)$ for all $k=0,\ldots,n$. In particular, $b_1(M) = b_{2n}(M) = 1$. Further, the leaves of the foliation $\cF_\eta$ are (compact) toric symplectic manifolds.
\item $M$ is cosymplectomorphic to the mapping torus of a $T$-equivariant symplectomorphism of a toric symplectic manifold, as in Example \ref{ex:toricmappingtori}.
\end{enumerate}
\end{thm}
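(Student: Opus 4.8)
The plan is to extract everything from the Morse theory of a generic component $\mu^A$ of the momentum map, supplied by Proposition \ref{prop:momentummorsebott}, and then to realize the mapping-torus structure explicitly via the Reeb flow. Throughout, $M$ is orientable because $\eta\wedge\omega^n$ is a volume form, so Poincaré duality is available.

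First I would analyse the critical set $M^T=\Crit(\mu^A)$. As the $T$-action preserves $\eta$ and $\omega$ it preserves the Reeb field $R$ and the splitting $TM=\langle R\rangle\oplus\ker\eta$; since $R$ is $T$-invariant its flow preserves $M^T$, so $R$ is tangent to $M^T$. At a point $p\in M^T$ the isotropy representation on the $2n$-dimensional symplectic space $(\ker\eta)_p$ is an effective symplectic representation of $T^n$, hence has no zero weight; thus $T_pM^T=\langle R_p\rangle$, and $M^T$ is a disjoint union of closed Reeb orbits, i.e.\ of circles. The negative normal bundle of each such circle is a symplectic subbundle of $\ker\eta$, so every critical submanifold $F$ has even Morse index $\lambda_F$. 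The Morse theory of the momentum map is equivariantly perfect — the negative normal bundles are $T$-equivariantly oriented and carry no zero weight, so their equivariant Euler classes are not zero-divisors — and the action is equivariantly formal; combining the two (as in \cite{Bazzoni-Goertsches} and \cite{LinSjamaar}) yields the Poincaré polynomial
\[
P_t(M)=\sum_{F\subset M^T} t^{\lambda_F}(1+t).
\]
Since every $\lambda_F$ is even, each summand contributes equally to degrees $2k$ and $2k+1$, giving $b_{2k}(M)=b_{2k+1}(M)$ for all $k$. As $M$ is connected this forces $b_1(M)=b_0(M)=1$, and Poincaré duality then gives $b_{2n}(M)=b_1(M)=1$.

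Knowing $b_1(M)=1$, Corollary \ref{cor:kompaktes:blatt} shows that the leaves of $\cF_\eta$ are compact, and the proof of Lemma \ref{kompaktes:blatt} presents $M$ as the total space of a fibration $h\colon M\to S^1$ whose fibers are these leaves. Because every fundamental vector field $\oL A$ lies in $\ker\eta$ it is tangent to the leaves, so the connected group $T$ preserves each leaf $L$. The restriction $(L,\omega|_L)$ is then a compact $2n$-dimensional symplectic manifold carrying an effective Hamiltonian $T^n$-action with momentum map $\mu|_L$, i.e.\ a toric symplectic manifold in the sense of Delzant \cite{Delzant}. This completes part (1). For part (2) I would use the Reeb flow $\psi_s$, which preserves $\eta$ and $\omega$ and commutes with $T$. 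Its first return time to $L=h^{-1}(0)$ is constant: closing a Reeb trajectory from $x\in L$ to its first return $\varphi(x)\in L$ by a path inside $L$ gives a loop $\gamma_x$ winding once around $S^1$, and since $\eta|_L=0$ and $\eta(R)=1$ the return time equals $\int_{\gamma_x}\eta=\langle[\eta],[\gamma_x]\rangle$; as any two such loops differ by a class in the image of $H_1(L)$, on which $[\eta]$ vanishes, this value $P$ is independent of $x$. Hence $\varphi=\psi_P|_L\colon L\to L$ is a $T$-equivariant symplectomorphism, and $(x,s)\mapsto\psi_s(x)$ identifies $M$ with the mapping torus of $\varphi$. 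In these coordinates $R=\partial_s$, so $\imath_R\omega=0$ together with $L_R\omega=0$ gives $\omega=\omega|_L$, while $\eta(\partial_s)=1$ and $\eta|_{TL}=0$ give $\eta=ds$; thus the cosymplectic structure is precisely that of Example \ref{ex:toricmappingtori}.

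The main obstacle is the perfectness input used to pin down $P_t(M)$. The Morse--Bott property and the description of $M^T$ are routine once the isotropy weights are understood, but passing from the Morse--Bott inequalities to the exact equality $P_t(M)=\sum_F t^{\lambda_F}(1+t)$ — equivalently, ruling out cancellation between the even- and odd-degree classes carried by the critical circles — is the technical heart. A purely elementary counting argument, even combined with Poincaré duality, leaves the defect polynomial underdetermined (it only yields $\sum_k (b_{2k}-b_{2k+1})=0$), so one genuinely needs the equivariant perfectness together with equivariant formality of the Hamiltonian action, which I would establish along the lines of \cite{Bazzoni-Goertsches} or invoke from \cite{LinSjamaar}.
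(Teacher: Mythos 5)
Your proof is correct and follows essentially the same route as the paper: Morse--Bott theory of a generic momentum map component plus equivariant perfectness and formality to get $b_{2k}=b_{2k+1}$, Corollary \ref{cor:kompaktes:blatt} to obtain compact leaves, and the Reeb flow to realize the mapping torus. The only difference is that you spell out details the paper delegates to citations (that $M^T$ is a union of closed Reeb orbits, and the constancy of the first return time, for which the paper invokes \cite{GMP}); these fill-ins are sound.
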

\begin{proof}
By Proposition \ref{prop:momentummorsebott} a generic component $\mu^A\colon M\to \ft^*$ is a Morse-Bott function with critical set $M^T$. In this situation it is known that the $T$-action is equivariantly formal in the sense that the equivariant cohomology $H^*_T(M)$ is a free module over $S(\ft^*)$. Further, $\mu^A$ is automatically an (equivariantly) perfect Morse-Bott function, by the Atiyah-Bott Lemma \cite[Proposition 13.4]{Atiyah-Bott} (see also \cite[Proposition 4]{Duflot}). Thus, the Poincar\'e polynomial $P_t(M) = \sum_i t^i b_i(M)$ of $M$ is given by
\[
P_t(M) = \sum_B t^{\lambda_B} P_t(B),
\]
where $B$ runs over the connected components of $M^T$. In our situation, every component of $M^T$ is a circle $S^1$, hence $P_t(B) = 1+t$. Moreover, all indices $\lambda_B$ are even. This implies that $b_{2k}(M) = b_{2k+1}(M)$ for all $k=0,\ldots,n$, and in particular $b_1(M) = 1$.

By Corollary \ref{cor:kompaktes:blatt}, this implies that the leaves of $\cF_\eta$ are compact. At this point, one can apply the same technique used in the proof of Proposition 15 and in the discussion leading to Corollary 16 in \cite{GMP}; the family of diffeomorphisms alluded to there is given, in our situation, by the flow of the Reeb field. We conclude that $M$ is cosymplectomorphic to the mapping torus $L_\varphi$ of the symplectomorphism $\varphi\colon L\to L$ which is obtained as the restriction of a ``time-$r$ map'' of the Reeb flow to a leaf $L$. As $T$ commutes with the Reeb flow, $\varphi$ is $T$-equivariant.
\end{proof}

The second statement of Theorem \ref{thm:toriccosymplectic} identifies a compact toric cosymplectic manifold $M$ with the mapping torus of a $T$-equivariant symplectomorphism $\varphi$ of a toric symplectic manifold $(L,\sigma,T,\mu)$. Such a mapping torus is, as a smooth manifold, isomorphic to the product $L\times S^1$ if and only if $\varphi$ is isotopic to the identity. By a result of Pinsonnault \cite[Proposition 3.21 (1)]{Pinsonnault} this is always the case: there it is shown that the centralizer of $T$ in the symplectomorphism group is connected and contained in the group of Hamiltonian automorphisms. We obtain:

\begin{corollary}\label{cor:toriccosymplectic}
A compact toric cosymplectic manifold is, as a smooth manifold, the product of a toric symplectic manifold with a circle.
\end{corollary}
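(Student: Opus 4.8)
The plan is to leverage the mapping torus description already established in Theorem~\ref{thm:toriccosymplectic} and reduce the statement to a purely topological fact about mapping tori, whose one nontrivial hypothesis is then supplied by Pinsonnault's connectedness result. By Theorem~\ref{thm:toriccosymplectic}(2), $M$ is cosymplectomorphic to the mapping torus $L_\varphi$ of a $T$-equivariant symplectomorphism $\varphi\colon L\to L$ of a toric symplectic manifold $(L,\sigma,T,\mu)$; since we only care about the underlying smooth structure, I may forget the cosymplectic form and regard $\varphi$ merely as a diffeomorphism.

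First I would recall the elementary observation that the diffeomorphism type of a mapping torus $L_\varphi = L\times[0,r]/(p,0)\sim(\varphi(p),r)$ depends only on the \emph{isotopy class} of the gluing diffeomorphism $\varphi$, and is that of the trivial product precisely when $\varphi$ is isotopic to $\id$. Concretely, given a smooth isotopy $\{\varphi_t\}_{t\in[0,1]}$ with $\varphi_0=\id$ and $\varphi_1=\varphi$, the assignment $(p,s)\mapsto(\varphi_{s/r}^{-1}(p),s)$ on $L\times[0,r]$ satisfies $\varphi_0=\id$ at $s=0$ and $\varphi_r^{-1}\circ\varphi=\id$ at $s=r$, hence respects the two gluings and descends to a diffeomorphism $L_\varphi \to L\times S^1$. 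Thus it suffices to show that the diffeomorphism $\varphi$ furnished by Theorem~\ref{thm:toriccosymplectic} is isotopic to the identity.

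Next I would observe that $\varphi$, being $T$-equivariant and a symplectomorphism, lies in the centralizer $Z$ of $T$ inside the symplectomorphism group $\mathrm{Symp}(L,\sigma)$. At this point I would invoke Pinsonnault's result \cite[Proposition 3.21(1)]{Pinsonnault}, which asserts that for a toric symplectic manifold this centralizer $Z$ is connected (and in fact consists of Hamiltonian automorphisms). Connectedness of $Z$ means precisely that $\varphi$ can be joined to $\id$ by a path inside $Z\subseteq\mathrm{Symp}(L,\sigma)\subseteq\Diff(L)$, i.e.\ by a smooth isotopy of diffeomorphisms. Feeding this isotopy into the previous step yields $M\cong L\times S^1$ as smooth manifolds, as desired.

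The main obstacle is the connectedness of the centralizer $Z$; every other step is formal. This is exactly the content imported from Pinsonnault and is genuinely nontrivial, since a priori a symplectomorphism commuting with $T$ need not be even smoothly isotopic to the identity — the toricity of $(L,\sigma)$, which forces $T$ to be a maximal torus in the symplectomorphism group with strong rigidity for its centralizer, is essential. I would therefore present the topological reduction in full and cite Pinsonnault for the one hard input, exactly as foreshadowed in the paragraph preceding the statement.
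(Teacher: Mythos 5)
Your proposal is correct and follows essentially the same route as the paper: invoke Theorem~\ref{thm:toriccosymplectic}(2) for the mapping torus description, reduce to showing $\varphi$ is isotopic to the identity, and supply that via Pinsonnault's connectedness of the centralizer of $T$ in the symplectomorphism group. The only addition is your explicit formula for the diffeomorphism $L_\varphi\cong L\times S^1$, which the paper leaves implicit.
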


%\begin{corollary}
%A compact toric cosymplectic manifold carries an adapted $T$-invariant coK\"ahler metric.\marginpar{falsch hier, oder? Nur f\"ur $K$-kosymplektisch??}
%\end{corollary}
%\begin{proof}
%Every compact symplectic toric manifold $(L,\sigma,T,\mu)$ admits a compatible K\"ahler structure, since it is given by symplectic reduction of $\bC^N$. Obviously, the elements $t\in T$ then act by K\"ahler automorphisms of $L$. Thus, the product metric induces a coK\"ahler structure on $L\times S^1$.
%\end{proof}

\begin{remark}
The reference of Pinsonnault was pointed out to us by the referee. A previous version of this paper obtained the same result by using an argument due to Lerman and Tolman, see \cite[Proposition 7.3]{Lerman-Tolman}.
\end{remark}

\begin{remark}
 Not every compact coK\"ahler manifold is toric, even when the first Betti number is 1. For instance, the compact 3-dimensional coK\"ahler manifold constructed in \cite{CdLM} has $b_1=1$ but is not homeomorphic to the product $S^2\times S^1$, hence it is not toric.
\end{remark}

\section{Toric K-cosymplectic manifolds}\label{Toric:K:cosymplectic}

In \cite{Bazzoni-Goertsches} we introduced the notion of $K$-cosymplectic manifold, in analogy to the notion of $K$-contact manifolds in contact geometry.

\begin{definition}
A \emph{K-cosymplectic manifold} is a cosymplectic manifold $(M,\eta,\omega)$ such that the Reeb field $R$ is Killing with respect to some Riemannian metric.
\end{definition}

If $(M,\eta,\omega)$ is a K-cosymplectic manifold equipped with a Riemannian metric with respect to which the Reeb field is Killing, then $\cF_R$ is a Riemannian foliation. Let $(L,\sigma)$ be a symplectic manifold with a compatible Riemannian metric and let $\varphi\colon L\to L$ be a symplectomorphism which is also an isometry. It is easy to see that $L_\varphi$ has a natural K-cosymplectic structure (see \cite[Proposition 2.12]{Bazzoni-Goertsches}).

Not every cosymplectic manifold $(M,\eta,\omega)$ admits a metric which makes it K-cosymplectic. We provide two classes of examples.

\begin{example}
Let $(L,\sigma)$ be a compact symplectic manifold and let $\varphi\colon L\to L$ be a symplectomorphism. Suppose that there is a metric on $(L_\varphi,\eta,\omega)$ such that the Reeb field $R$ is Killing. As proved in \cite[Proposition 2.8]{Bazzoni-Goertsches}, this is equivalent to the existence of an \emph{adapted} Riemannian metric $h$ for which $R$ is Killing, namely one for which $R$ is orthogonal to $\cF_\eta$. In our case, $\cF_\eta$ is just the fiber of the mapping torus bundle $L\to L_\varphi\to S^1$. The Reeb field $R$ is the projection to $L_\varphi$ of the vector field $\frac{\partial}{\partial t}$ on $\bR$, hence it projects further to a nowhere vanishing vector field on $S^1$. Since $R$ is Killing, its flow consists of isometries and its time-1 map generates the structure group of the mapping torus bundle; the latter is the cyclic group generated by $\varphi$, according to \cite[Propostion 6.4]{Bazzoni-Oprea}. Consider the sequence $\{\varphi^n\}_{n\in\bN}$; since $\mathrm{Isom}(L_\varphi,h)$ is a compact Lie group, by Myers-Steenrod, there exists a convergent subsequence $\{\varphi^{n_k}\}_{k\in\bN}$; call $\bar{\varphi}$ the limit isometry; since $\varphi$ leaves every leaf invariant, $\left\{\varphi^{n_k}\big|_L\right\}_k$ converges to $\bar{\varphi}\big|_L$. Consider now the standard symplectic form on the torus $T^2$ and the symplectomorphism $\varphi_A\colon T^2\to T^2$ covered by the linear map
\[
A=\begin{pmatrix}
   2 & 1\\1 & 1
  \end{pmatrix}\,.
\]
of $\bR^2$. For different $n\in\bN$, the actions of $\varphi_A^n$ on $H^1(T^2;\bZ)\cong\bZ^2$ are never equivalent, hence the sequence $\{\varphi_A^n\}_n$ does not admit any convergent subsequence. Hence, $T^2_{\varphi_A}$ does not admit any metric which makes the cosymplectic structure K-cosymplectic.
\end{example}

% in particular, it contains a finite number of connected components. This means that the order of $\varphi$ in the diffeomorphism group of $P$ is finite. Thus, if one picks $\varphi$ such that for every $m,n\in\bZ$, $m\neq n$, $\varphi^n$ and $\varphi^m$ are contained in different connected components of the diffeomorphism group of $P$, it can not arise through this construction, hence the cosymplectic structure $(P_\varphi,\eta,\omega)$ admits no metric for which $R$ is Killing. A concrete example of this situation is given by $\varphi_A\colon T^2\to T^2$, where
% 
% Indeed, for $m,n\in\bZ$ with $m\neq n$, the action of $\varphi_A^n$ and $\varphi^m_A$, $m\neq n$ on $H^1(T^2;\bZ)\cong\bZ^2$ are not equivalent, hence $\varphi_A^n$ and $\varphi^m_A$ lie in different connected components of the diffeomorphism groups of $T^2$.

\begin{example}
A \emph{cosymplectic structure} on a Lie algebra $\fn$ with $\dim\fn=2n+1$ is a pair $(\eta,\omega)$ with $\eta\in\fn^*$, $\omega\in\Lambda^2\fn^*$ such that $d\eta=0=d\omega$ and $\eta\wedge\omega^n\neq 0$, $d$ being the Chevalley-Eilenberg differential. Assume that there is a scalar product $h$ on $\fn$ such that $R\in\fn$, the Reeb field of $(\eta,\omega)$, is Killing. Then $R$ is parallel with respect to the Levi-Civita connection of $h$, see \cite[Corollary 2.4]{Bazzoni-Goertsches}. In this case, one checks that $\fn$ splits as a direct sum $\fn'\oplus\langle R\rangle$, where $\fn'=\ker\eta$. According to \cite[Proposition 5.21]{Bazzoni-Marrero}, there exist nilpotent Lie algebras, not a direct sum, which admit cosymplectic structures; on such Lie algebras, one finds no scalar product for which the Reeb field is Killing. A \emph{nilmanifold} is the compact quotient of a connected, simply connected, nilpotent Lie group $N$ by a lattice $\Gamma$. A geometric structure on a nilmanifold $\Gamma\backslash N$ is \emph{invariant} if it comes from a left-invariant geometric structure on $N$, that is, from a geometric structure on $\fn=\textrm{Lie}(N)$. Thus this construction provides examples of (invariant) cosymplectic structures on nilmanifolds which are not K-cosymplectic.
\end{example}

By Corollary \ref{cor:toriccosymplectic}, a compact toric cosymplectic manifold is the product of a compact symplectic toric manifold and a circle. In this section we provide an alternative proof of this corollary in the case of toric $K$-cosymplectic manifolds. 

\begin{thm}\label{thm:2}
Let $(M,\eta,\omega)$ be a compact toric K-cosymplectic manifold of dimension $2n+1$. Then $M$ is equivariantly cosymplectomorphic to a mapping torus $L_t = L\times [0,r]/(p,0)\sim (tp,r)$, where $L$ is a toric symplectic manifold, acted on by an $n$-dimensional torus $T$, $r>0$ some number, and $t$ an element of $T$, considered as a symplectomorphism $t:L\to L$. In particular, $M$ is diffeomorphic to $L\times S^1$.
\end{thm}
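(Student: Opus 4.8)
The plan is to manufacture, out of the given torus $T$ and the Reeb flow, a single compact torus and to read the mapping-torus structure off it. First I would use the $K$-hypothesis: fix a metric for which $R$ is Killing, so that the closure $T_R$ of the Reeb flow inside the isometry group is a torus. Since $T$ preserves $(\eta,\omega)$ it fixes $R$ and hence commutes with $T_R$; as both are compact and commute, $G:=T\cdot T_R$ is again a compact torus, and it acts by cosymplectomorphisms (the Reeb flow satisfies $L_R\eta=L_R\omega=0$, and this passes to the closure).

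Next, by Theorem \ref{thm:toriccosymplectic}(1) we have $b_1(M)=1$, so by Corollary \ref{cor:kompaktes:blatt} the leaves of $\cF_\eta$ are compact and $M\to M/\cF_\eta=:B\cong S^1$ is a fibration; moreover each leaf is a compact toric symplectic manifold. Since $\eta(R)=1$, the Reeb flow projects to a rotation of $B$, so it returns to a fixed leaf $L_0$ after some minimal time $r>0$; set $c:=\exp(rR)$, a first-return cosymplectomorphism of $L_0$ commuting with $T$. The crux is to prove that $c$ lies in $T$.

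To this end I would first note that $c^*\mu^A=\mu^A+\text{const}$ for all $A\in\ft$ (both sides have differential $\imath_{\bar A}\omega$), so $c$ descends to a translation of the moment polytope $\Delta$ of $L_0$; as $\Delta$ is bounded this translation is trivial, and $c$ preserves every $T$-orbit of $L_0$. In action--angle coordinates $(I,\theta)$ on the open dense set of free orbits, $c$ therefore reads $(I,\theta)\mapsto(I,\theta+\nabla F(I))$ for some $F$ on $\Delta^\circ$ (symplecticity forces the angle-shift to be a closed, hence exact, one-form in $I$). Here compactness of $G$ enters decisively: the set $\{c^m\}$ is precompact in $G$, hence its restrictions to $L_0$ are precompact, whereas $c^m$ shears by $m\nabla F$, whose derivative $m\,\Hess F$ is unbounded unless $\Hess F\equiv 0$; hence $\nabla F$ is constant and $c|_{L_0}$ agrees with the action of a torus element $t\in T$. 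Since $ct^{-1}$ then acts trivially on $L_0$ and commutes with the Reeb flow, it fixes every Reeb-translate of $L_0$, i.e.\ all of $M$, so by effectiveness $c=t\in T$.

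Finally, the map $(p,s)\mapsto\exp(sR)\cdot p$ identifies $L_0\times[0,r]/(p,0)\sim(tp,r)$ with $M$; it is an equivariant cosymplectomorphism, since the Reeb flow preserves $(\eta,\omega)$ (so $\eta$ pulls back to $ds$ and $\omega$ to the leaf form) and commutes with $T$, whence $T$-equivariance. As $t\in T$ lies in the identity component, it is isotopic to the identity through symplectomorphisms, so $L_t$ is diffeomorphic to $L\times S^1$. The main obstacle is the third step, ruling out a genuine momentum-dependent shear and forcing the first-return map into $T$: this is exactly where the $K$-condition — compactness of the Reeb closure, hence of $G$ — is indispensable, playing off the boundedness of $\Delta$ against the unboundedness of $m\,\Hess F$.
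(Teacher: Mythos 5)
Your proof is correct, and at the decisive step it takes a genuinely different route from the paper. The paper starts from Theorem \ref{thm:toriccosymplectic}(2), which already presents $M$ as the mapping torus of a $T$-equivariant symplectomorphism $\varphi$ of a leaf, and then disposes of $\varphi$ in one stroke by forming the compact Abelian group $G=\overline{\langle T,\varphi\rangle}$ inside the isometry group and quoting Masuda's \cite[Lemma 7.1]{Masuda}, which says that the only elements of $N_G(T)$ centralizing $T$ are those of $T$ itself; hence $G=T$ and $\varphi\in T$. You instead rebuild the mapping torus directly from the Reeb first-return map $c=\exp(rR)$ (legitimate: $b_1=1$ and Corollary \ref{cor:kompaktes:blatt} give the fibration, and $\eta(R)\equiv 1$ gives a constant return time) and then prove the needed rigidity by hand: $\mu\circ c=\mu+{\rm const}$ forces a translation of the bounded polytope, hence $\mu\circ c=\mu$, so $c$ preserves every $T$-orbit; in action--angle coordinates $c$ is a shear $(I,\theta)\mapsto(I,\theta+\nabla F(I))$, and precompactness of $\{c^m\}$ in the compact group $\overline{T\cdot T_R}$ (this is exactly where the $K$-condition enters, as in the paper) is incompatible with the unbounded derivative $m\,\Hess F$ unless $\Hess F\equiv 0$. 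This is in effect a self-contained proof of the special case of Masuda's lemma that is actually used, which is a real gain in transparency; the price is that you must invoke standard toric facts (connectedness of leaves, momentum fibers being single orbits, density of the free stratum) that the citation packages away. Two small points to tidy up: to put $T$ and $T_R$ inside one compact group you should first average the auxiliary metric over $T$ (this keeps $R$ Killing since $T$ commutes with the Reeb flow) — the paper glosses over the same point; and the final appeal to ``effectiveness'' is unnecessary, since $ct^{-1}=\mathrm{id}$ on all of $M$ already gives $c=t$ as diffeomorphisms.
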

\begin{proof}
We know that $M$ is $T$-equivariantly cosymplectomorphic to a mapping torus of some $T$-equivariant symplectomorphism $\varphi\colon L\to L$, where $L$ is a toric symplectic manifold. We have to show that $\varphi$ is given by an element of $T$. For this, we note that as $M$ is $K$-cosymplectic, the Reeb flow consists of isometries with respect to some auxiliary metric, hence its closure is a compact Lie subgroup. It follows that the  subgroup $G$ of the isometry group of $M$ defined as the closure of the group generated by $T$ and $\varphi$ is a compact Abelian Lie group acting on $L$ by symplectomorphisms. By \cite[Lemma 7.1]{Masuda}, the only elements in $N_G(T)$ acting trivially on $T$ are the elements of $T$ themselves. As $G$ is Abelian, it follows that $G=T$, i.e., $\varphi\in T$.

The last assertion follows because the diffeomorphisms of $L$ given by the elements of $T$ are isotopic to the identity.
\end{proof}

\begin{corollary}
A compact toric $K$-cosymplectic manifold $(M,\eta,\omega)$ carries an adapted $T$-invariant coK\"ahler metric.
%\marginpar{falsch hier, oder? Nur f\"ur $K$-kosymplektisch??}
\end{corollary}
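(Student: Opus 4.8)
The plan is to leverage the structure theorem just proved, namely that $M$ is equivariantly cosymplectomorphic to a mapping torus $L_t = L\times[0,r]/(p,0)\sim(tp,r)$, where $(L,\sigma)$ is a compact toric symplectic manifold and $t\in T$ is a symplectomorphism of $L$. Since a coK\"ahler structure is the cosymplectic analogue of a K\"ahler structure, the natural strategy is to build a $T$-invariant K\"ahler metric on the toric symplectic base $L$, equip $L$ with this metric, and then transport it onto the mapping torus via the standard adapted metric construction. The key observation that makes this work is that $t$ acts on $L$ by a symplectomorphism coming from the torus $T$, and such elements can be arranged to be isometries of a suitably chosen invariant metric. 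This is what will allow the fiberwise metric to descend to the quotient.

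First I would produce a $T$-invariant compatible almost complex structure $J$ on $(L,\sigma)$: since $L$ is a compact symplectic manifold with a Hamiltonian $T$-action, the space of compatible almost complex structures is nonempty and contractible, and averaging over the compact group $T$ yields a $T$-invariant compatible $J$, giving a $T$-invariant Riemannian metric $g_L(\cdot,\cdot)=\sigma(\cdot,J\cdot)$ that is almost K\"ahler. For a genuinely \emph{coK\"ahler} (as opposed to merely almost coK\"ahler) metric one wants $J$ integrable; here one can invoke that a compact toric symplectic manifold is a projective toric variety and carries a $T$-invariant K\"ahler metric (for instance the Fubini-Study metric pulled back under the Delzant embedding, or a toric K\"ahler metric constructed via Guillemin-Abreu theory), so I may take $g_L$ to be K\"ahler and $T$-invariant from the outset. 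The second step is to note that $t\in T$ acts on $(L,g_L)$ as an isometry, precisely because $g_L$ was chosen $T$-invariant; hence the $\bZ$-action generating the mapping torus is by isometries of $(L,g_L)$, and $g_L$ together with the product metric $dt\otimes dt$ on the $\bR$-factor descends to a well-defined Riemannian metric $g$ on $L_t$.

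The third step is to verify that $(g,\eta,\omega)$ is an adapted coK\"ahler metric in the sense required. The Reeb field $R$ on $L_t$ is the image of $\partial/\partial t$, so $g$ makes $R$ a unit vector field orthogonal to $\ker\eta=\cF_\eta$, which is the adaptedness condition; moreover $R$ is Killing because the flow of $\partial/\partial t$ descends to isometries, confirming the $K$-cosymplectic structure. Finally, the almost contact metric structure $(\phi,R,\eta,g)$ obtained by letting $\phi$ restrict to $J$ on $\ker\eta$ and annihilate $R$ is normal and the fundamental forms are closed, so it is coK\"ahler; all tensors are built from $T$-invariant data on $L$ and the invariant $\bR$-direction, hence $g$ is $T$-invariant. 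I expect the main obstacle to be the compatibility between the mapping-torus gluing and the integrability of $J$: one must ensure that $t$ being an isometry of the chosen K\"ahler metric is genuinely compatible with the cosymplectic form $\omega$ descending correctly, i.e.\ that the fiberwise K\"ahler structure is preserved under $t$ so that $\phi$ and $\omega$ glue to globally defined smooth tensors. This is where the fact that $t\in T$ and $J$ is $T$-invariant is essential, and it is the point that requires the most care.
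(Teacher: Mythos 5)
Your proposal is correct and follows essentially the same route as the paper: identify $M$ with the mapping torus $L_t$ via Theorem \ref{thm:2}, equip the compact toric symplectic manifold $L$ with a $T$-invariant K\"ahler metric (the paper gets this from the Delzant construction as a symplectic reduction of $\bC^N$, you from the Delzant embedding or Guillemin--Abreu theory, which amounts to the same thing), and observe that $t\in T$ acts by K\"ahler isometries so the adapted product metric descends to a coK\"ahler metric on $L_t$. The only difference is that you spell out the descent and the verification of the coK\"ahler conditions, where the paper simply cites its earlier mapping-torus construction.
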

\begin{proof}
By Theorem \ref{thm:2}, $M$ is equivariantly cosymplectomorphic to a symplectic mapping torus $L_t$, where $(L,\sigma,T,\mu)$ is a compact symplectic toric manifold and $t\in T$ is a symplectomorphism; hence $L$ admits a compatible K\"ahler structure, since it is obtained by symplectic reduction of $\bC^N$. Any element of $T$ acts by K\"ahler automorphisms of $L$, thus $M=L_t$ admits a natural coK\"ahler metric (see Proposition 2.12 and Remark 2.13 in \cite{Bazzoni-Goertsches}).
\end{proof}

\begin{example}
Let $L$ be a toric symplectic manifold, equipped with a compatible K\"ahler metric, and $t\in T$ such that $\{t^n\mid n\in \bZ\}$ is dense in $T$. Then $T$ acts by isometries on $L$, and the mapping torus $L_t$, as a $K$-cosymplectic manifold, has only finitely many closed Reeb orbits. They are exactly those Reeb orbits that pass through the finite fixed point set $L^T$.
\end{example}

\begin{example}
Consider $S^2$, with the standard circle action. Then an equatorial Dehn twist $\varphi\colon S^2\to S^2$ is an $S^1$-equivariant symplectomorphism. The natural cosymplectic structure on the associated mapping torus is toric, but not $K$-cosymplectic. In fact, if it was, then the time-1 map of the Reeb flow would be an isometry fixing a nonempty open subset but not the whole manifold, which is impossible. Note that $\varphi$ is isotopic to the identity, so that this mapping torus is, as a smooth manifold, diffeomorphic to $S^2\times S^1$.
\end{example}

In \cite[Theorem 4.3]{Bazzoni-Goertsches} we showed that for a $K$-cosymplectic manifold, the basic cohomology $H^*(M;\cF_R)$ of the Reeb foliation encodes the same information as ordinary de Rham cohomology $H^*(M;\bR)$: we have an equality 
\[ 
H^*(M;\bR) = H^*(M;\cF_R)\otimes \Lambda\langle [\eta]\rangle,
\]
i.e.,
\begin{equation}\label{eq:basicderhamcohom}
H^p(M;\bR) = H^p(M;\cF_R) \oplus [\eta]\wedge H^{p-1}(M;\cF_R)
\end{equation}
for all $p=0,\ldots,2n+1$. In particular,
\[
H^1(M;\bR) = H^1(M;\cF_R) \oplus \bR\cdot [\eta].
\]
\begin{corollary}
For a toric $K$-cosymplectic manifold $M$ we have $H^{odd}(M;\cF_R) = 0$.
\end{corollary}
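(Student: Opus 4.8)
The plan is to combine the basic/de Rham splitting recorded in equation \eqref{eq:basicderhamcohom} with the Betti-number symmetry coming from Theorem \ref{thm:toriccosymplectic}, and then run a short induction on the cohomological degree.

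First I would note that a toric $K$-cosymplectic manifold is in particular a toric cosymplectic manifold, so Theorem \ref{thm:toriccosymplectic}(1) applies and gives $b_{2k}(M)=b_{2k+1}(M)$ for all $k=0,\ldots,n$, together with $b_0(M)=b_1(M)=1$. Writing $\beta_p \coloneqq \dim H^p(M;\cF_R)$ and taking dimensions in \eqref{eq:basicderhamcohom}, the splitting becomes the numerical identity
\[
b_p(M) = \beta_p + \beta_{p-1} \qquad (p\geq 0),
\]
with the convention $\beta_{-1}=0$. The goal is then precisely to show $\beta_{2k+1}=0$ for every $k$.

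Next I would set up the induction. For the base case, connectedness of $M$ gives $\beta_0 = b_0(M) = 1$, and then $b_1(M) = \beta_1 + \beta_0 = 1$ forces $\beta_1 = 0$. For the inductive step, assuming $\beta_{2k-1}=0$, the identity yields $b_{2k}(M) = \beta_{2k}$ and $b_{2k+1}(M) = \beta_{2k+1}+\beta_{2k}$; comparing these two via the equality $b_{2k}(M)=b_{2k+1}(M)$ immediately gives $\beta_{2k+1}=0$, which feeds the next step of the induction. Running this through all $k$ yields $H^{\odd}(M;\cF_R)=0$.

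This argument is essentially bookkeeping, so I do not expect a serious obstacle; the one point that deserves care is the validity of the two inputs in this setting. One must confirm that the symmetry $b_{2k}(M)=b_{2k+1}(M)$, established in Theorem \ref{thm:toriccosymplectic} for toric cosymplectic manifolds, indeed applies here (it does, since $K$-cosymplectic is a special case), and that the basic cohomology groups $H^p(M;\cF_R)$ are finite-dimensional so that the dimension count turning \eqref{eq:basicderhamcohom} into the numerical identity is legitimate — both of which hold in the compact $K$-cosymplectic situation where \eqref{eq:basicderhamcohom} was proved.
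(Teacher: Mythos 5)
Your proposal is correct and follows essentially the same route as the paper: it turns \eqref{eq:basicderhamcohom} into the numerical relation $b_p(M)=b_p(M,\cF_R)+b_{p-1}(M,\cF_R)$, combines it with $b_{2k}(M)=b_{2k+1}(M)$ from Theorem \ref{thm:toriccosymplectic}, and concludes by induction. You have merely written out the induction in more detail than the paper does.
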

\begin{proof}
The equalities \eqref{eq:basicderhamcohom} imply the relation $b_p(M) = b_p(M,\cF_R) + b_{p-1}(M,\cF_R)$ between the Betti and basic Betti numbers of $M$. Combining this with the equalities $b_{2k}(M) = b_{2k+1}(M)$ shown in Theorem \ref{thm:toriccosymplectic} the claim follows by induction.
\end{proof}

\section{Deformations}

In this section we recall the deformations of type I and II of cosymplectic structures. We believe that such deformations are worth investigating: both of them preserve the property of a cosymplectic manifold of being toric; those of type II allow to construct ``genuine'' cosymplectic manifolds, that is, cosymplectic manifolds which are not cosymplectomorphic to symplectic mapping tori.

% To provide some examples, we recall in this section the deformations of type I and II, argue that Hamiltonian actions survive these types of deformations, and determine the momentum map of the deformed structures.

\subsection{Type I deformations}

In \cite{Bazzoni-Goertsches} we introduced deformations of type I of almost contact (metric) structures. Let $(M,\eta,\omega)$ be a cosymplectic manifold and let $\theta\in\mathfrak{X}(M)$ be a cosymplectic vector field such that $1+\eta(\theta)>0$. Define
\[
\eta'=\frac{\eta}{1+\eta(\theta)} \quad \textrm{and}
\quad \omega'=\frac{\omega+\imath_\theta\omega\wedge\eta'}{1+\eta(\theta)}\,.
\]
Then $(M,\eta',\omega')$ is a cosymplectic manifold. Under appropriate compatibility conditions, type I deformations preserve K-cosymplectic structures, see \cite[Proposition 6.2]{Bazzoni-Goertsches}.

Suppose $(M,\eta,\omega)$ is a cosymplectic manifold endowed with the Hamiltonian action of a torus $T$. Given $\theta\in\ft$, we can use the corresponding fundamental vector field $\bar{\theta}$ to perform a type I deformation of the cosymplectic structure, giving $(M,\eta',\omega')$. Since the fundamental fields of the action belong to $\cF_\eta$, and this foliation is preserved under a type I deformation, it makes sense to ask whether the given torus action is also Hamiltonian for the deformed structure. This is indeed the case:

\begin{proposition}
The $T$-action on the deformed cosymplectic structure is Hamiltonian with momentum map $\mu'\colon M\to\ft^*$,
\[
\mu'=\frac{1}{1+\eta(\bar{\theta})}\mu\,,
\]
where $\theta\in\ft$ induces the vector field $\bar{\theta}$ used to deform.
\end{proposition}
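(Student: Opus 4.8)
The plan is to verify directly the three conditions characterizing a Hamiltonian action for the deformed structure $(\eta',\omega')$: that for each $A\in\ft$ the fundamental field $\bar A$ is the Hamiltonian vector field of $(\mu')^A=\tfrac{1}{1+\eta(\bar\theta)}\mu^A$, i.e.\ (i) $\eta'(\bar A)=0$, (ii) $(\mu')^A$ is invariant under the deformed Reeb flow, and (iii) $\imath_{\bar A}\omega'=d(\mu')^A$, and finally that $\mu'$ is equivariant. Throughout I would use the two structural identities available here: $\eta(\bar A)=0$ for all $A\in\ft$ (valid for any Hamiltonian action, since $\bar A$ is a Hamiltonian vector field), and $\omega(\bar A,\bar B)=0$ for all $A,B\in\ft$ (isotropy of the orbits, from the first Lemma of Section~\ref{sec:toric}).

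The key preliminary observation, on which the formula rests, is that the denominator $c:=1+\eta(\bar\theta)$ is a nonzero \emph{constant}: by Proposition~\ref{prop:etalinearform} the function $\eta(\bar\theta)=\eta_0(\theta)$ is constant on $M$ (in fact it vanishes by the identity $\eta(\bar A)=0$, so $c=1$). The point is that constancy of $c$ gives $d(\mu')^A=\tfrac1c\,d\mu^A$ with no spurious term from differentiating the denominator. Condition (i) is then immediate, $\eta'(\bar A)=\tfrac1c\eta(\bar A)=0$. For (iii) I would expand $\imath_{\bar A}\omega'=\tfrac1c\bigl(\imath_{\bar A}\omega+\omega(\bar\theta,\bar A)\,\eta'-\eta'(\bar A)\,\imath_{\bar\theta}\omega\bigr)$; the last two summands vanish by isotropy and by (i), and since $d\mu^A=\imath_{\bar A}\omega$ this leaves $\imath_{\bar A}\omega'=\tfrac1c\,d\mu^A=d(\mu')^A$.

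It remains to treat (ii), for which I would first identify the deformed Reeb field as $R'=R+\bar\theta$: this vector field satisfies $\eta'(R')=\tfrac1c(\eta(R)+\eta(\bar\theta))=1$, and a short contraction of $\omega'$ against $R+\bar\theta$ (using $\imath_R\omega=0$ and $\omega(\bar\theta,R)=0$) yields $\imath_{R'}\omega'=0$, so $R'=R+\bar\theta$ by uniqueness of the Reeb field. Then $R'(\mu^A)=R(\mu^A)+\imath_{\bar\theta}\,d\mu^A=0+\omega(\bar A,\bar\theta)=0$, where the first term vanishes because $\mu^A$ is invariant under the original Reeb flow and the second by isotropy; as $c$ is constant, $(\mu')^A$ is invariant under the deformed Reeb flow as well. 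Finally, equivariance of $\mu'$ follows from that of $\mu$ together with the constancy of $c$ (for a torus the coadjoint action is trivial, so equivariance amounts to $T$-invariance of the components, which is preserved under scaling by the constant $c$).

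The only genuinely delicate step is the constancy of the denominator $c=1+\eta(\bar\theta)$; once that and the two identities $\eta(\bar A)=0$, $\omega(\bar A,\bar B)=0$ are in place, together with the formula $R'=R+\bar\theta$ for the deformed Reeb field, every remaining verification is a routine contraction of forms.
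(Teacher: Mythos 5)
Your proof is correct and the core computation---expanding $\imath_{\bar A}\omega'$ and killing the extra term via isotropy of the $T$-orbits (equivalently, $\bar\theta(\mu^A)=0$, which the paper derives from equivariance of $\mu$)---is essentially the paper's argument. You are in fact more thorough than the paper: you also identify the deformed Reeb field $R'=R+\bar\theta$ and verify invariance of $(\mu')^A$ under its flow, check equivariance, and correctly observe that $1+\eta(\bar\theta)$ is constant (indeed equal to $1$ here, since $\bar\theta$ is a Hamiltonian vector field), a point the paper only records after its proof via Proposition \ref{prop:etalinearform}.
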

\begin{proof}
Let $X$ be an element in $\ft$ and denote by $\bX$ the corresponding fundamental vector field on $M$. We compute
  \begin{align*}
   \imath_{\oL{X}}\omega'&=\frac{1}{1+\eta(\bar\theta)}\left(\imath_{\bX}(\omega+\imath_{\bar\theta}\omega\wedge\eta')\right)=
   \frac{1}{1+\eta(\bar\theta)}\left(d\mu^X+\omega(\bar\theta,\bX)\eta'\right)\\
   &=\frac{1}{1+\eta(\bar\theta)}\left(d\mu^X-d\mu^X(\bar\theta)\eta'\right)=\frac{1}{1+\eta(\bar\theta)}\left(d\mu^X-\bar\theta(\mu^X)\eta'\right)\,.
  \end{align*}
  Let us fix $x\in M$; the integral curve of $\bar{\theta}$ passing through $x$ comes from the 1-parameter subgroup of $\theta\in\ft$; by equivariance, for every $g\in T$ belonging to this 1-parameter subgroup we have
  \[
   \mu^X(g\cdot x)=\mu^X(x)\circ\Ad_{g^{-1}}=\mu^X(x)
  \]
  since, $T$ being abelian, the adjoint action is trivial. Thus $\mu^X$ is constant along the flow of $\bar{\theta}$ and the second summand vanishes, leaving us with
  \[
   \imath_{\oL{X}}\omega'=\frac{d\mu^X}{1+\eta(\bar\theta)}\,.
   \]
\end{proof}
Note that by Proposition \ref{prop:etalinearform}, the momentum map has changed only by a constant.

\subsection{Type II deformations}\label{sec:exdefII}
In \cite{Bazzoni-Goertsches} we introduced as well type II deformations of cosymplectic structures, in analogy with the contact setting. Consider a cosymplectic manifold $(M^{2n+1},\eta,\omega)$ with Reeb field $R$ and let $\beta\in\Omega^1(M)$ be an arbitrary closed basic form, that is, $d\beta=0$ and $\imath_R\beta=0$. Then $\eta'=\eta+\beta$ is again a closed 1-form on $M$ and since $\imath_R\beta=0=\imath_R\omega$, $\beta\wedge\omega^n=0$, hence $\eta'\wedge\omega=\eta\wedge\omega$ is a volume form on $M$ and $(M,\eta',\omega)$ is a new cosymplectic structure on $M$. The 2-form $\omega$ and the Reeb field of the deformed structure are the same, but the foliation $\cF_\eta$ has changed.

Suppose a torus $T$ acts in a Hamiltonian fashion on a compact cosymplectic manifold $(M,\eta,\omega)$ and let $\mu\colon M\to \ft^*$ be the corresponding momentum map. This implies, in particular, that $\bar{A}$ is tangent to $\cF_\eta$ for every $A\in\ft$. Suppose we perform a deformation of type II $(M,\eta' = \eta + \beta,\omega)$ of $(M,\eta,\omega)$, where $\beta$ is not only $R$-basic but also $T$-basic, i.e., $\imath_{\bar{A}} \eta = 0$ for all $A\in \ft$. The same $T$-action on $(M,\eta',\omega)$ is again Hamiltonian; as $\omega$ has not changed, the momentum map has not changed either.

The following general proposition shows that in order to keep the $T$-action cosymplectic it in fact suffices to assume that the closed one-form $\beta$ is $R$-basic and $T$-invariant, as $T$-horizontality is automatic:

\begin{proposition}\label{prop:TinvTbasic} Consider an action of a torus $T$ on a compact manifold $M$, with at least one fixed point. Let $\eta\in \Omega^1(M)$ be a $T$-invariant closed one-form on $M$. Then $\eta$ is $T$-basic.
\end{proposition}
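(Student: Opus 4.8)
The plan is to prove that a $T$-invariant closed one-form $\eta$ is $T$-basic, meaning $\imath_{\bar A}\eta = 0$ for all $A \in \ft$. Since $\eta$ is already invariant, by Cartan's formula $L_{\bar A}\eta = d\imath_{\bar A}\eta + \imath_{\bar A}d\eta = d\imath_{\bar A}\eta = 0$ (using $d\eta = 0$), each function $\eta(\bar A)$ is $T$-invariant and, crucially, \emph{locally constant} along the orbits. The goal is to upgrade this to the statement that $\eta(\bar A)$ vanishes identically.

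\medskip

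First I would fix $A \in \ft$ and consider the smooth function $f_A \coloneqq \eta(\bar A) = \imath_{\bar A}\eta \in C^\infty(M)$. The computation above shows $df_A = L_{\bar A}\eta - \imath_{\bar A}d\eta = 0$ is \emph{not} quite what we get; rather we only obtain that $f_A$ is constant along the flow of $\bar A$ itself, since $\bar A(f_A) = L_{\bar A}(\imath_{\bar A}\eta) = \imath_{\bar A}L_{\bar A}\eta = 0$. More usefully, for the full torus, invariance of $\eta$ gives that $f_A$ is $T$-invariant. The key idea is to integrate $f_A$ around the orbit directions: for a generic $A$ generating a dense one-parameter subgroup, the orbit closures are subtori, and $\eta$ being closed means its integral over any loop depends only on the homotopy class. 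On an orbit of the circle generated by $A$, the integral $\oint \eta$ equals $\int_0^{2\pi} f_A\, dt = 2\pi f_A(x)$ (using invariance to see $f_A$ is constant along the orbit of $A$).

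\medskip

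The decisive step uses the \textbf{fixed point}. At a fixed point $p \in M^T$, every fundamental vector field vanishes, $\bar A(p) = 0$, so $f_A(p) = \eta_p(\bar A(p)) = 0$. I would then argue that the value of the orbit integral $\oint_{\gamma_x}\eta$, where $\gamma_x$ is the orbit of the circle $\exp(\bR A)$ through $x$, varies continuously with $x$ and is an integer multiple of the period times a quantity determined by the cohomology class $[\eta]$ evaluated on the orbit's homology class. As $x$ approaches $p$, the orbit $\gamma_x$ shrinks to the point $p$ (it is null-homotopic for $x$ near $p$, since it bounds), forcing $\oint_{\gamma_x}\eta \to 0$; but this integral equals $2\pi f_A(x)$ and is locally constant in the direction transverse to orbits by a connectedness/continuity argument. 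Combined with $T$-invariance of $f_A$ and the fact that $M^T \neq \emptyset$ lies in the closure of generic orbits' accessible region, I expect to conclude $f_A \equiv 0$.

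\medskip

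The hard part will be making the last continuity-and-vanishing argument rigorous, since it is not automatic that every point of $M$ can be connected to a fixed point while controlling the orbit integral. The cleanest route is probably to reduce to the one-parameter subgroup generated by a single generic $A$ whose flow is \emph{quasi-periodic} with orbit closures equal to $T$-orbits, show that $f_A$ is constant on each orbit closure by invariance, and then exploit that the closed one-form $\eta$ integrates to zero over the null-homologous orbits near the fixed point. Alternatively, averaging $\eta(\bar A)$ over $T$ and using that its integral against the invariant volume, paired with the existence of a zero at $p$, forces vanishing via a maximum-principle-type or cohomological argument. The existence of the fixed point is the essential hypothesis that rules out counterexamples such as a flat torus with a translation-invariant form, so any correct proof must genuinely use $M^T \neq \emptyset$.
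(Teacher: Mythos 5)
There is a genuine gap, and it is of an unusual kind: you wrote down the correct, complete argument and then talked yourself out of it. The Cartan formula gives
\[
d\bigl(\imath_{\bar A}\eta\bigr) \;=\; L_{\bar A}\eta \;-\; \imath_{\bar A}\,d\eta \;=\; 0-0 \;=\; 0,
\]
valid for every $A\in\ft$, since $T$-invariance of $\eta$ means $L_{\bar A}\eta=0$ for \emph{all} $A\in\ft$ and $\eta$ is closed. This says that $f_A=\eta(\bar A)$ has vanishing differential \emph{everywhere on $M$}, i.e.\ it is locally constant on $M$ — not merely constant along the flow of $\bar A$, which is the strictly weaker statement you substituted for it via $\bar A(f_A)=0$. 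On a connected $M$ the function $f_A$ is therefore a single constant, and evaluating at a fixed point $p$, where $\bar A(p)=0$, gives $f_A\equiv 0$. That is the entire proof; it is exactly the paper's proof (the paper phrases it for $A$ generating a circle subgroup, which suffices by density and linearity in $A$, but the computation above needs no such reduction).

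The replacement argument you propose instead — integrating $\eta$ over orbits of a generic one-parameter subgroup, arguing that the orbit integral is controlled by $[\eta]$ evaluated on the orbit's homology class, and letting orbits shrink to a fixed point — is not carried out: you yourself flag that "the hard part will be making the last continuity-and-vanishing argument rigorous," and indeed the step where you pass from vanishing of $f_A$ near the fixed point set to vanishing on all of $M$ is exactly where the argument is missing (not every point lies near $M^T$, and "locally constant in the direction transverse to orbits" is asserted, not proved). As submitted, the proposal is not a proof. The fix is simply to trust your first computation: $d(\imath_{\bar A}\eta)=0$ globally, then use connectedness and the fixed point. Your closing remark is correct and worth keeping: the fixed point hypothesis is genuinely needed, as a translation-invariant $1$-form on a torus shows.
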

\begin{proof} It suffices to show the claim for those $A\in \ft$ whose associated one-parameter subgroup of $T$ is closed, i.e., a circle $S^1$. The $T$-invariance of the closed one-form $\eta$ implies that $\imath_{\bar{A}}\eta$ is a constant, say $c$. Because the $T$-action has a fixed point, by computing this constant at a fixed point, we see that it has to vanish.
\end{proof}

% Let $p\in M$ be arbitrary. Because the $T$-action has a fixed point, the orbit map $j_p:S^1\to M$ of $p$, is homotopic to a constant map. Thus, 
% \[
% \int_{S^1} j_p^*\eta = 0.
% \] 
% As this integral evaluates as the product of $c$ with the length of the one-parameter subgroup of $A$, this implies that $c=0$.

We wish to show that in the $K$-cosymplectic setting we can always achieve, by a type II deformation, that the foliation $\cF_\eta$ has all leaves closed.

\begin{lemma}\label{lem:invbasicTR} Let $(M,\eta,\omega)$ be a compact cosymplectic manifold. Then the inclusion $\Omega(M,\cF_R)^T\to \Omega(M,\cF_R)$ of $T$-invariant, $R$-basic differential forms into $R$-basic differential forms is a quasi-isomorphism of differential graded algebras.
\end{lemma}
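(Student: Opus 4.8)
The plan is to show that averaging over the torus $T$ gives a cochain homotopy inverse to the inclusion, exploiting the fact that $T$ is compact and connected. First I would set up the averaging operator $P\colon \Omega(M,\cF_R)\to \Omega(M,\cF_R)^T$ defined by $P(\alpha) = \int_T g^*\alpha\, dg$, where $dg$ is the normalized Haar measure on $T$. The key point is that this is well-defined on $R$-basic forms: since the $T$-action commutes with the Reeb flow (the $T$-action is cosymplectic, hence $T$ preserves $\eta$ and $\omega$ and thus preserves $R$), each $g^*$ maps $R$-basic forms to $R$-basic forms, and the integral of $R$-basic forms remains $R$-basic. One checks that $P$ is a morphism of differential graded algebras commuting with $d$ (integration commutes with $d$, and $P$ lands in $T$-invariant forms because the measure is translation-invariant), and that $P$ restricted to $\Omega(M,\cF_R)^T$ is the identity, so $P$ is a left inverse to the inclusion.

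The heart of the argument is to produce a $T$-equivariant cochain homotopy $K$ with $\id - \iota\circ P = dK + Kd$ on $\Omega(M,\cF_R)$. This is the standard homotopy arising from integrating the infinitesimal action: for each $g = \exp(A)$ one uses Cartan's formula $L_{\bar A} = d\circ \imath_{\bar A} + \imath_{\bar A}\circ d$, and integrates the resulting homotopy for $g^*$ versus $\id$ over a path in $T$ from the identity to $g$, then averages over $T$. Concretely, writing $g_s$ for a one-parameter path, $g^*\alpha - \alpha = \int_0^1 \frac{d}{ds}(g_s^*\alpha)\,ds = \int_0^1 g_s^*(L_{\bar A}\alpha)\,ds$, and feeding in Cartan's formula produces the homotopy operator after integrating over $s$ and over $T$.

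The main obstacle is verifying that this homotopy operator $K$ preserves $R$-basic forms, i.e.\ that $K$ maps $\Omega(M,\cF_R)$ into itself. The worry is that $K$ is built from contractions $\imath_{\bar A}$ against fundamental vector fields, and one must check that these operations do not destroy $R$-basicness. This works precisely because the fundamental vector fields $\bar A$ of the $T$-action are themselves $R$-basic in the appropriate sense: they commute with $R$ (so $L_R \bar A = 0$), and since the $T$-action is Hamiltonian with each $\bar A$ tangent to $\cF_\eta$ and satisfying $\eta(\bar A) = 0$, the contraction $\imath_{\bar A}$ preserves the conditions $\imath_R(\cdot) = 0$ and $L_R(\cdot) = 0$ defining $R$-basic forms. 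I would carefully confirm that $\imath_{\bar A}\alpha$ and $L_{\bar A}\alpha$ remain $R$-basic whenever $\alpha$ is, using that $[\bar A, R] = 0$ together with the identities $\imath_R\imath_{\bar A} = -\imath_{\bar A}\imath_R$ and $L_R\imath_{\bar A} = \imath_{\bar A}L_R + \imath_{[R,\bar A]}$.

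Once these verifications are in place, $\iota$ and $P$ are mutually inverse up to the $d$-homotopy $K$ at the level of $R$-basic forms, so the inclusion induces an isomorphism on cohomology, establishing the quasi-isomorphism. The algebra structure is respected because $P$ is multiplicative on the image (averaging of a product of invariant forms), giving the statement for differential graded algebras.
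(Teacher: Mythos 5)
Your proposal is correct and follows essentially the same route as the paper: the paper invokes Onishchik's result that the averaging operator is chain homotopic to the identity via an explicit homotopy operator and observes that both restrict to $R$-basic forms, while you simply unpack that black box, building the homotopy from Cartan's formula and verifying directly (via $[\bar A,R]=0$, which holds because $T$ preserves $\eta$ and $\omega$ and hence $R$) that the contractions $\imath_{\bar A}$ and the pullbacks $g_s^*$ preserve $R$-basicness. The only superfluous point is the appeal to $\eta(\bar A)=0$ and the Hamiltonian condition, which is not needed for this verification; the commutation $[\bar A,R]=0$ suffices.
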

\begin{proof}
In \cite[\S 9.1, Theorem 1]{Onishchik} it is shown that the averaging operator $r\colon\Omega(M) \to \Omega(M)^T \subset \Omega(M)$ is homotopic to the identity, via a homotopy operator $k:\Omega(M)\to \Omega(M)$, i.e., $r-\id = d\circ k + k \circ d$. The averaging operator restricts to a well-defined map $\Omega(M,\cF_R)\to \Omega(M,\cF_R)^T$, and going through the proof, one verifies that also $k$ respects the condition of being $R$-basic. Thus, also $r\colon\Omega(M,\cF_R)\to \Omega(M,\cF_R)^T\subset \Omega(M,\cF_R)$ is homotopic to the identity, and one concludes as in the last paragraph of the proof in \cite{Onishchik}.
\end{proof}

We apply the lemma to the case in which $(M,\eta,\omega)$ is a compact K-cosymplectic manifold; then, by \eqref{eq:basicderhamcohom} we have $H^1(M;\bR) = H^1(M;\cF_R)\oplus \bR\cdot [\eta]$. Further, by Lemma \ref{lem:invbasicTR}, we can find an $R$-basic, $T$-invariant closed 1-form $\beta$ (which by Proposition  \ref{prop:TinvTbasic} is even $T$-basic) such that the class of $\eta' = \eta + \beta$ is a real multiple of an integer class. By Lemma \ref{kompaktes:blatt} the leaves of $\cF_{\eta'}$ are compact.

Finally we observe that we can use type II deformations also to produce examples of Hamiltonian actions on cosymplectic manifolds $M$ for which $\cF_\eta$ has nonclosed leaves, provided the first Betti number of $M$ is larger than one. Here is a very easy concrete example:

\begin{example}\label{ex:105}
 For $n\geq 1$ consider the manifold $M_n=\bC P^n\times T^2\times S^1$. Interpreted as the mapping torus of the identity on $\bC P^{n}\times T^2$, $M_n$ has an obvious cosymplectic structure $(\eta,\omega)$, where $\eta$ is the length 1-form on $S^1$ and $\omega$ is the product symplectic structure on $\bC P^n\times T^2$. The torus $T=T^n$ acts on $\bC P^n$ in the usual way,
 \[
  (t_1,\ldots,t_n)\cdot [z_0:\ldots:z_n]=[z_0:t_1z_1:\ldots:t_nz_n]\,. 
 \]
 and we consider the induced $T$-action on $M_n$ acting only on the first factor.
 
This action is clearly Hamiltonian in the cosymplectic sense. If $x_1,x_2\in\Omega^1(T^2)$ generate the cohomology of $T^2$, we have $H^1(M_n;\bR)=\langle [x_1],[x_2],[\eta]\rangle$. We now perform a deformation of type II to the cosymplectic structure by setting $\eta'=\eta+\ve_1x_1+\ve_2x_2$; in particular, if we choose $\ve_1$ and $\ve_2$ to be rationally independent, the leaves of the foliation $\cF_{\eta'}$ are the product of $\bC P^n$ with an irrational codimension 1 foliation on $T^2\times S^ 1$, hence are noncompact in $M_n$. However, $T$ acts in a Hamiltonian fashion on the deformed structure $(M_n,\eta',\omega)$, by the above discussion, and the image of the momentum map for the action on the deformed structure coincides with the original one, which is well-known to be the standard simplex in $\bR^n$.
 \end{example}

\bibliographystyle{plain}
\bibliography{bibliography}

\end{document}